\newtheorem{theorem}{Theorem}[section]
\theoremstyle{definition}
\theoremstyle{remark}
\numberwithin{equation}{section}
\newcommand{\al}{\alpha}
\newcommand{\si}{\sigma}
\newcommand{\Z}{\mathbb Z}
\newcommand{\C}{\mathbb C}
\newcommand{\g}{\mathfrak g}
\newcommand{\h}{\mathfrak h}
\newcommand{\cA}{\mathcal A}
\newcommand{\cK}{\mathcal K}
\def\a{\alpha}
\begin{document}
%\magstep2
%Topmatter

%One author
\title[On realization of some twisted toroidal Lie algebras]
{On realization of some twisted toroidal Lie algebras}
\author{Naihuan Jing, Chad R. Mangum and Kailash C. Misra}
\address{Department of Mathematics,
   North Carolina State University,
   Raleigh, NC 27695-8205, USA}
\email{jing@math.ncsu.edu, misra@math.ncsu.edu}
\address{Department of Mathematics,
   Niagara University,
   5795 Lewiston Road,
   Niagara University, NY 14109, USA}
\email{cmangum@niagara.edu}
\thanks{Jing acknowledges the support of Simons Foundation and NNSF of China,
and Misra acknowledges the support of Simons Foundation.}
\keywords{Lie algebras,  toroidal Lie algebras, Dynkin diagram automorphisms}
\subjclass{Primary: 17B67}

\begin{abstract}
Toroidal Lie algebras are generalizations of affine Lie algebras. In 1990, Moody, Rao and Yokonuma gave a presentation for untwisted toroidal Lie algebras. In this paper we give a presentation for the twisted toroidal Lie algebras of type $A$ and $D$ constructed by Fu and Jiang.
\end{abstract}

\maketitle

\section{Introduction} \label{intro}

Since the seventies, infinite dimensional Kac-Moody Lie algebras \cite{K} have had numerous applications in mathematics and physics, mostly thanks to
their subclass of affine Lie algebras $\hat{\mathfrak g}$. Affine Lie algebras consist of two types: the untwisted and twisted ones.
%Actually one of the first nontrivial examples of affine Lie algebras, the twisted affine Lie algebras,
%were constructed as a result of classifying finite automorphisms of finite dimensional simple Lie algebras.
Every automorphism $\sigma$ of the finite dimensional simple Lie algebra $\mathfrak g$ gives rise to a ``twisted'' affine Lie algebra $\hat{\mathfrak g}_{\sigma}$, which is a genuinely twisted affine Lie algebra if and only if $\sigma$ is a Dynkin diagram automorphism of $\mathfrak g$.

In connection with resonance models in physics, Frenkel \cite{F} has constructed loop Kac-Moody Lie algebras using vertex operators
\cite{FLM}. Double affine Lie algebras are special examples of such extended new algebras
beyond affine Lie algebras. The n-toroidal Lie algebras are the universal
central extensions of the n-loop algebras \cite{BK} based on a finite dimensional simple Lie algebra, which have since found
new applications in geometry and physics \cite{S, NSW}.

As a generalization of the untwisted affine Lie algebras $\hat{\mathfrak g}$, Moody, Rao and Yokonuma \cite{MRY} have given a loop algebra realization of the
2-toroidal Lie algebras $T(\mathfrak g)$ with generators and relations.
The MRY realization has led to new constructions of
the toroidal Lie algebras using vertex operators \cite{FM, T1, T2}, other similar techniques used in
the affine Lie algebras \cite{L, JMT}, and
restricted modules and simple modules for extended affine Lie algebras \cite{BB, G}.
Moreover, new realizations of the 2-toroidal Lie algebras of ADE types
in a new form of the McKay correspondence \cite{FJW} were also based upon the MRY realization.

Similar to twisted affine Lie algebras, Fu and Jiang \cite{FJ} have considered twisted $n$-toroidal Lie algebras in an abstract setting
and studied their integrable modules. In \cite{EM} vertex representations of
general toroidal Lie algebras and Virasoro-toroidal Lie algebras have been constructed.

In this paper, we give an MRY-like presentation to the Fu-Jiang twisted 2-toroidal Lie algebras. In recognition of the similarity
between the MRY presentation and Drinfeld realization \cite{D} for quantum affine algebras, we have determined fixed-point subalgebras under a Dynkin diagram automorphism for the toroidal Lie algebras in the $A_{2n+1}, D_{n+1}, D_4$ types and therefore the fixed-point subalgebras
are also realized as central extensions of certain twisted 2-toroidal Lie algebras using field-like
generators. This shows that these twisted toroidal Lie algebras
also enjoy the similar property as twisted affine Lie algebras.

\section{Twisted Toroidal Lie Algebras}

Let $\g$ be the finite dimensional simple Lie algebra $A_{2n-1} , (n \geq 3)$, $D_{n+1}, (n \geq 2)$,
or $D_4$ over the field of complex numbers $\C$. We denote the Chevalley generators of $\g$ by
$\{e_i', f_i', h_i' \mid 1 \leq i \leq N\}$ where $N = 2n - 1, n+1, 4,$ respectively.  Then $\h' = \text{span}\{h_i' \mid 1 \leq i \leq N\}$ is the Cartan
subalgebra of $\g$. Let $\{\al_i' \mid 1 \leq i \leq N\} \subset \h'^*$  denote the simple roots and $\Delta$ be the set of roots for $\g$. Note that
$\al_j'(h_i') = a_{ij}'$ where $A' = (a_{ij}')_{i,j = 1}^N$ is the Cartan matrix associated with $\g$. Let $( \ | \ )$ be the nondegenerate symmetric invariant bilinear form on $\g$ defined by $( x | y ) = tr(xy), \frac{1}{2}tr(xy), \frac{1}{2}tr(xy)$ for all $x, y \in \g$. Then
$( h_i' | h_i' ) = 2 , 1 \leq i \leq N$. Since the Lie algebra $\g$ is simply-laced, we can identify the invariant form on $\h'$ to that on
the dual space $h'^*$ and normalize the inner product by $( \al | \al ) = 2, \al \in \Delta$.

Let $\Gamma$ denote the Dynkin diagram for $\g$ and $\si$ be the following Dynkin diagram automorphism of order $r = 2, 2, 3$ respectively:
\begin{eqnarray*}
&&\sigma(h_i')=h_{N-i+1}', i=1, \cdots , N, \ \ \mbox{for type } \ A_{2n-1} \nonumber \\
&&\sigma(h_i')=h_i', i=1, \cdots , n-1=N-2; \sigma(h_n')=h_{n+1}',
\ \ \mbox{for type} \ D_{n+1} \nonumber\\
&&\sigma(h'_1, h'_2, h'_3, h'_4)=(h'_3, h'_2, h'_4, h'_1)\ \ \mbox{for type} \
D_4.\nonumber
\end{eqnarray*}

Then the Lie algebra $\g$ is decomposed as a ${\Z}/r{\Z}$-graded
Lie algebra:
$${\g}={\g}_0\oplus \cdots\oplus{\g}_{r-1},$$
where ${\g}_i=\{ x\in {\g}| \sigma(x)=\omega^i x\}$ and $
\omega=e^{2\pi\sqrt{-1}/r}$. It is well-known that
the subalgebra ${\g}_0$ is the simple Lie algebra of types
$C_n$, $B_n$ and $G_2$ respectively. Let $I = \{1, 2, \cdots , n\}$ for $\g = A_{2n-1}, D_{n+1}$ and $I = \{1, 2\}$ for $\g = D_4$.
The Chevalley generators $\{e_i, f_i, h_i \mid i \in I\}$ of $\g_0$ are given by:
\begin{eqnarray*}
&&e_i=e_i', f_i=f_i', h_i=h_i', \ \mbox{if } \sigma(i)=i; \\
&&e_i=\sum_{j=0}^{r-1}e'_{\sigma^j(i)}, \
f_i=\sum_{j=0}^{r-1}f'_{\sigma^j(i)}, \
h_i=\sum_{j=0}^{r-1}h'_{\sigma^j(i)}, \ \mbox{if } \sigma(i)\neq i. \\
%&&e_n=\frac 1{\sqrt{2}}(e'_n+e'_{n+1}),
%f_n=\frac 1{\sqrt{2}}(f'_n+f'_{n+1}),
%h_n=\frac 12(h'_n+h'_{n+1}) \ \mbox{for } A_{2n}.
\end{eqnarray*}

The Cartan subalgebra of $\g_0$ is $\h_0 = \text{span}\{h_i \mid i \in I\}$ and the simple roots $\{\al_i \mid i \in I\}\subset \h_0^*$ are given by:
$$
\al_i = \frac{1}{r}\sum_{s=0}^{r-1}\al'_{\si^s(i)}.
$$
%where $p_i = 1$ if $\si(i) \neq i$ and $p_i = r$ if $\si(i) = i$.

%Recall that
%\begin{eqnarray*}
%\theta=\left\{\begin{array}{ll}
%\a'_1+\cdots+\a'_{2n-2}+\a'_{2n-1}, \ \mbox{for } A_{2n-1}, \\
%\a'_1+2\a'_2+\cdots+2\a'_{n-1}+\a'_n+\a'_{n+1} , \ \mbox{for } D_{n+1}, \\
%\a'_1+\cdots+\a'_{2n}, \ \mbox{for } A_{2n}, \\
%\a'_1+2\a'_2+\a'_3+\a'_4, \ \mbox{for } D_4,
%\end{array}\right.
%\end{eqnarray*}
%is the highest root in $\g$.

%It is necessary to normalize the invariant form on $\g_0$ by:
%\begin{equation}
%(x | y) = \langle x | y \rangle/r
%\end{equation}
%Then the form $\langle x | y \rangle/r$ on $\h'^*$ induces the normalized symmetric form on $\h_0^*$ again denoted by $( \ | \ )$ and
Then we have
\begin{equation}
(\alpha_i|\alpha_j)=d_ia_{ij},
\ \ \mbox{for all} \ i,j\in I
\end{equation}
where $A = (a_{ij})_{i,j \in I}$ is the Cartan matrix for $\g_0$ and $(d_1, \cdots, d_n)=(1/2, \cdots, 1/2, 1)$, $(1, \cdots, 1, 1/2)$,
or $(1/3, 1)$,
for $\g = A_{2n-1}, D_{n+1}$
or  $D_4$ respectively. Then correspondingly $r=2, 2$ and $3$.
Note that $A = (a_{ij})_{i,j \in I}$ is given as follows:
\begin{equation*}
\begin{cases}
a_{ii} = 2, i \in I\\
a_{i,i+1} = a_{i+1,i} = -1, \g_0 \neq G_2\\
a_{12} = -3, a_{21} = -1, \g_0 = G_2\\
a_{n-1,n} = -2, a_{n,n-1} = -1, \g_0 = C_n\\
a_{n-1,n} = -1, a_{n,n-1} = -2, \g_0 = B_n\\
a_{ij} = 0, \text{otherwise}.
\end{cases}
\end{equation*}

Recall that
\begin{eqnarray*}
\theta=\left\{\begin{array}{ll}
\a'_1+\cdots+\a'_{2n-2}+\a'_{2n-1}, \ \mbox{for } A_{2n-1}, \\
\a'_1+2\a'_2+\cdots+2\a'_{n-1}+\a'_n+\a'_{n+1} , \ \mbox{for } D_{n+1}, \\
%\a'_1+\cdots+\a'_{2n}, \ \mbox{for } A_{2n}, \\
\a'_1+2\a'_2+\a'_3+\a'_4, \ \mbox{for } D_4,
\end{array}\right.
\end{eqnarray*}
is the highest root in $\g$. Let $f'_0$ denote the $\theta$-root vector, $e'_0$ denote the $(-\theta)$-root vector such that
$[h'_0, e'_0] = 2 e'_0, [h'_0, f'_0] = -2 f'_0$ where $h'_0 = [e'_0 , f'_0]$.

Let $\cA = \C[s, s^{-1}, t, t^{-1}]$ be the ring of Laurent polynomials in the commuting variables $s, t$ and $L(\g) = \g \otimes_{\C} \cA$  be the multi-loop algebra with the Lie bracket given by:

$$
[x \otimes s^jt^m , y \otimes s^kt^l] = [x, y] \otimes s^{j+k}t^{m+l},
$$
for all $x, y \in \g, j, k, m, l \in \Z$. For $j \in \Z$ we define $0 \leq \bar{j} < r$ such that $j \equiv \bar{j} \ \mbox{mod} \ r$. For all $j \in \Z$ we define $\g_j = \g_{\bar{j}}$. We extend the automorphism $\si$ of $\g$ to an automorphism $\bar{\si}$ of $L(\g)$ by defining:
$$
\bar\si(x \otimes s^jt^m) = \omega^{-j} \si(x) \otimes s^jt^m
$$
where $x \in \g, j, m \in \Z$. We denote the $\bar\si$ fixed points of $L(\g)$ by $L(\g, \si)$.
Note that the subalgebra $L(\g, \si)$ has the $\Z$-gradation:
$$
L(\g, \si) = \oplus_{j \in \Z} L(\g, \si)_j,
$$
where $L(\g, \si)_j = \g_j \otimes \cA_j, \cA_j = \mbox{span}_{\C}\{s^jt^m \mid m \in \Z\}=s^j\C[t, t^{-1}]$.

Set $F = \cA \otimes \cA$. Then $F$ is a two sided $\cA$-module via the action $a(b_1 \otimes b_2) = ab_1 \otimes b_2 = (b_1 \otimes b_2)a$ for all $a, b_1, b_2 \in \cA$. Let $G$ be the $\cA$-submodule of $F$ generated by $\{1 \otimes ab -a\otimes b -b \otimes a \mid a, b \in \cA\}$. The $\cA$- quotient module $\Omega_{\cA} = F/G$ is called the $\cA$ - module of K{\"a}hler differentials.  The canonical quotient map $d : \cA \longrightarrow \Omega_{\cA}$ given by $da = (1 \otimes a) + G, a \in \cA$ is the differential map. Let $- : \Omega_{\cA} \longrightarrow
\Omega_{\cA}/d\cA = \cK'$ be the canonical linear map. Since $\overline{d(ab)} = 0$, we have $\overline{a(db)} = - \overline{(da)b} = - \overline{b(da)}$ for all  $a, b \in \cA$. Then $\cK' = span_{\C}\{\overline{bda} \mid a, b \in \cA\}$. Set $\cK = span_{\C}\{\overline{bda} \mid a \in \cA_k, b \in \cA_l , k+l \equiv 0 (mod r)\}$ which is a subalgebra of $\cK'$. We note that $\{\overline{s^{j-1}t^mds}, \overline{s^jt^{-1}dt}, \overline{s^{-1}ds}\mid j \in \Z, m \in \Z_{\neq 0}\}$ is a basis for $\cK$ and the following relations are easy to check.
\begin{equation}\label{kahlercalc}
\overline{s^{\ell}ds^k} = \delta_{k, -\ell} k \overline{s^{-1}ds}, \ \
 \overline{s^{\ell}t^{-1}d(s^{k}t)} = \delta_{k, -\ell} k \overline{s^{-1}ds} + \overline{s^{k + \ell}t^{-1}dt}.
\end{equation}
Let
$$
T(\g) = L(\g , \si) \oplus \cK,
$$
with the Lie bracket given by
\begin{equation*}
[x\otimes a, y\otimes b] = [x, y]\otimes ab + (x | y)\overline{bda}, \qquad [T(\g), \cK] = 0,
\end{equation*}
where $x \in \g_i, y \in \g_j, a \in \cA_i, b \in \cA_j$ for $i, j \in \Z$. Using \cite[Proposition 2.2]{BK}, it is shown \cite[Theorem 2.1]{FJ} that $T(\g)$, with the canonical projection map $\eta : T(\g) \rightarrow L(\g, \si)$, is the universal central extension of $L(\g, \si)$. $T(\g)$ is called the twisted toroidal Lie algebra of type $\g$.

The algebra $T'(\g) = L(\g) \oplus \cK'$ is called the untwisted toroidal Lie algebra of type $\g$. Let $\tilde{A'} = (a_{ij}')_{i,j = 0}^N$ be the affine Cartan matrix of the untwisted affine Lie algebra $\g^{(1)}$.
In \cite{MRY}, Moody, Rao and Yokonuma (MRY) gave a presentation of untwisted toroidal Lie algebra $T'(\g)$ analogous to the Drinfeld realization
\cite{D} of affine Lie algebras as follows.

Let $t'(\g)$ be the algebra generated by $\{{\not c} , a'_i(k) , X(\pm a'_i, k) \mid 0 \leq i \leq N, k \in \Z\}$ satisfying the relations:
\begin{equation}\label{compuntw}
\begin{cases}
[\not{c} , a'_i (k)] = [\not{c} , X(\pm a'_i , k)] = 0\\
 [a'_i (k) , a'_j (m)] = (\a'_i | \a'_j) \delta_{k , -m} k \not{c}\\
 [a'_i (k) , X(\pm a'_j , m)] = \pm (\a'_i | \a'_j) X(\pm a'_j , m+k)\\
 \displaystyle [X(a'_i , m) , X(-a'_j , k)] = -\delta_{ij} \left( a'_i (m+k) + \frac{2m\delta_{m , -k}}{(\a'_i | \a'_j)} \not{c} \right)\\
 [X(\pm a'_i , m) , X(\pm a'_i , k)] = 0\\
 \left( \text{ad} X(\pm a'_i , m)\right)^{1-a'_{ij}} X(\pm a'_j , k) = 0, \ \text{for}  \  i \neq j ,\\
 \end{cases}
\end{equation}
where $i, j \in \{0, 1, \cdots , N\}$ and $k,m \in \Z$.

 Let $z,w,z_1,z_2,...$ be formal variables. We define formal power series with
 coefficients from the toroidal Lie algebra $t'(\g)$:
 $$
 a'_i(z)=\sum_{n\in \Z}a'_i(n)z^{-n-1},
\qquad X(\pm a'_i,z)=\sum_{n\in \Z}X(\pm a'_i, n)z^{-n-1},
 $$
 for $i=0,1, \cdots , N$.
We will use the delta function
\begin{equation*}
\delta(z-w)=\sum_{n\in\Z}w^nz^{-n-1}
\end{equation*}

Using $\displaystyle \frac1{z-w}=\sum_{n=0}^{\infty}z^{-n-1}w^n$, $|z|>|w|$,
we have the following useful expansions:
\begin{align*}
\delta(z-w)&=\iota_{z, w}((z-w)^{-1})+\iota_{w, z}((w-z)^{-1}),\\
\partial_w\delta(z-w)&=
\iota_{z, w}((z-w)^{-2})-\iota_{w, z}((w-z)^{-2}),
\end{align*}
where $\iota_{z, w}$ means the expansion at the region $|z|>|w|$. For simplicity
in the following we will drop $\iota_{z, w}$ if it is
clear from the context.

Now we can write the relations in (\ref{compuntw}) in terms of power series as follows:

\begin{equation}\label{seriesuntw}
\begin{cases}
[\not{c} , a'_i (z)] = [\not{c} , X(\pm a'_i , z)] = 0\\
 [a'_i (z) , a'_j (w)] = (\a'_i | \a'_j) \partial_w\delta(z - w) \not{c}\\
 [a'_i (z) , X(\pm a'_j , w)] = \pm (\a'_i | \a'_j) X(\pm a'_j , w)\delta(z - w)\\
 [X(a'_i , z) , X(-a'_j , w)] = -\delta_{ij} \frac{2}{(\a_i | \a_j)}\left( a'_i (z)\delta(z - w) + \partial_w\delta(z - w) \not{c} \right)\\
 [X(\pm a'_i , z) , X(\pm a'_i , w)] = 0\\
 \text{ad} X(\pm a'_i , z_2) X(a'_j , z_1) = 0, \ \text{if}  \  a'_{ij} = 0, i \neq j \\
 \text{ad} X(\pm a'_i , z_3)\text{ad} X(\pm a'_i , z_2) X(a'_j , z_1) = 0, \ \text{if}  \  a'_{ij} = -1, i \neq j \\
\text{ad} X(\pm a'_i , z_4) \text{ad} X(\pm a'_i , z_3)\text{ad} X(\pm a'_i , z_2) X(a'_j , z_1) = 0,  \text{if} \  a'_{ij} = -2, i \neq j ,
\end{cases}
\end{equation}
where $i, j \in \{0, 1, \cdots , N\}.$

Define the map $\pi : t'(\g) \longrightarrow L(\g)$ by:
\begin{equation*}
\begin{cases}
 \not{c} \mapsto 0,\\
 a'_j (k) \mapsto h'_{j} \otimes s^k, \\
 X(a'_0, k) \mapsto e'_0 \otimes s^kt, \\
 X(-a'_0, k) \mapsto - f'_0 \otimes s^kt^{-1}, \\
 X(a'_i, k) \mapsto e'_{i} \otimes s^k, \\
 X(-a'_i, k) \mapsto - f'_{i} \otimes s^k,
 \end{cases}
\end{equation*}
for $0 \leq j \leq N$, $1 \leq i \leq N$.

The following theorem shows that $t'(\g)$ gives a realization of the untwisted toroidal Lie algebra $T'(\g)$.

\begin{theorem} (\cite[Proposition 3.5]{MRY})\label{untwisted}:
The map $\pi$ is a surjective homomorphism, the kernel of $\pi$ is contained in the center $Z(t'(\g))$ and $(t'(\g), \pi)$ is the universal central extension of $L(\g)$.
\end{theorem}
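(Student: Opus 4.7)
The plan is to proceed in four steps: (i) verify that $\pi$ is a well-defined Lie algebra homomorphism; (ii) show surjectivity; (iii) show $\ker\pi \subseteq Z(t'(\g))$; and (iv) establish universality.

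For step (i), since $\pi(\not{c}) = 0$, the relations in (\ref{compuntw}) that involve $\not{c}$ on the right-hand side collapse to identities that hold trivially in $L(\g)$: for example, $[h'_i \otimes s^k, h'_j \otimes s^m] = 0$ because $\h'$ is abelian, which matches $[a'_i(k), a'_j(m)] = (\a'_i|\a'_j)\delta_{k,-m} k \not{c} \mapsto 0$; similarly, the central correction in $[X(a'_i,m), X(-a'_j,k)]$ vanishes under $\pi$. The remaining relations, including the Serre relations, become the Chevalley/Serre relations of the affine Lie algebra $\g^{(1)}$ (encoded by $\tilde A'$) tensored with monomials in $s$, which hold because $\{h'_j, e'_j, f'_j : 0 \leq j \leq N\}$ realize the affine Chevalley generators of $\g^{(1)}$ inside $L(\g)$ via the assignments in the definition of $\pi$.

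For step (ii), the images $h'_j \otimes s^k$ (for $0 \leq j \leq N$) together with $\pm e'_i \otimes s^k$ and $\mp f'_i \otimes s^k$ (for $1 \leq i \leq N$) generate $\g \otimes \C[s^{\pm 1}]$ by the Chevalley presentation of $\g$. The remaining images $e'_0 \otimes s^k t$ and $-f'_0 \otimes s^k t^{-1}$, combined with the Chevalley generators of $\g$ through iterated brackets, produce $\g \otimes s^k t^m$ for every $m \in \Z$, exhausting $L(\g)$.

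For steps (iii) and (iv), I would invoke the standard fact that $L(\g) = \g \otimes \C[s^{\pm 1}, t^{\pm 1}]$ is perfect (since $\g$ is simple) and that its universal central extension is exactly $T'(\g) = L(\g) \oplus \cK'$. The strategy is to construct a lift $\phi : T'(\g) \to t'(\g)$ sending $h'_j \otimes s^k t^m$, $e'_i \otimes s^k t^m$, $f'_i \otimes s^k t^m$ to explicit iterated brackets of the MRY generators, and sending the K{\"a}hler-differential generators $\overline{s^{-1} ds}$, $\overline{s^k t^{-1} dt}$ of $\cK'$ to central combinations in $t'(\g)$ arising from commutators such as $[a'_i(k), a'_j(-k)]$ and $[X(a'_i, m), X(-a'_i, -m)]$. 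The main obstacle is verifying that $\phi$ is well-defined, i.e., that the defining relations of $T'(\g)$---in particular the K{\"a}hler identities (\ref{kahlercalc})---lift consistently to $t'(\g)$. Once $\phi$ is well-defined, $\pi \circ \phi$ coincides with the canonical projection $T'(\g) \to L(\g)$; combining this with surjectivity of $\pi$ and universality of $T'(\g)$ forces $\phi$ to be an isomorphism, so $\ker \pi = \phi(\cK')$ is central and $(t'(\g), \pi)$ is the universal central extension of $L(\g)$.
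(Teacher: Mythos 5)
The paper offers no proof of this statement---it is quoted directly from \cite[Proposition~3.5]{MRY}---so there is no internal argument to compare against; your proposal must stand on its own. Steps (i) and (ii) are sound in outline: with $\not{c}\mapsto 0$ the relations (\ref{compuntw}) reduce to loop-algebra identities and the affine Serre relations for $\g^{(1)}$, and the generation argument via $e'_0\otimes s^kt$ and $f'_0\otimes s^kt^{-1}$ is standard.

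The genuine gap is in steps (iii)--(iv). First, you defer the only hard computation: well-definedness of $\phi:T'(\g)\to t'(\g)$. Since $T'(\g)$ is given not by generators and relations but as an explicit vector space with an explicit bracket, defining $\phi$ means prescribing it on a basis and verifying that the K{\"a}hler $2$-cocycle is reproduced by the central terms of (\ref{compuntw}) on \emph{all} pairs of basis elements; this verification is the substance of the theorem and is exactly the part you label ``the main obstacle'' without resolving. Second, and more seriously, the concluding inference is invalid. A surjective homomorphism $\phi:T'(\g)\to t'(\g)$ satisfying $\pi\phi=\eta'$ (where $\eta':T'(\g)\to L(\g)$ is the projection) does yield $\ker\pi=\phi(\cK')$, which is central as the image of a central subalgebra under a surjection---so parts one and two of the theorem would follow---but nothing stated forces $\phi$ to be injective, and without injectivity you cannot conclude universality. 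Indeed, \emph{every} central quotient of the universal central extension admits such a surjection ($L(\g)$ itself does, via $\eta'$), and such quotients are generally not universal. To close the argument you must show $\ker\phi=0$, for instance by constructing a homomorphism $\psi:t'(\g)\to T'(\g)$ in the opposite direction with $\eta'\psi=\pi$, whence $\psi\phi=\mathrm{id}_{T'(\g)}$ by the uniqueness clause in the universal property of $T'(\g)$; this opposite-direction construction is precisely the strategy the paper itself employs for the twisted analogue in Section 3. Alternatively, one could verify directly that the central elements $\phi(\overline{b\,da})$ remain linearly independent in $t'(\g)$, but that too requires an argument you have not supplied.
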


\section{MRY presentation of $T(\g)$}

In this section we give an MRY type presentation for the twisted toroidal Lie algebra $T(\g)$ which is the main result in this paper. For
$\g = A_{2n-1}, r=2, 1 \leq i \leq n-1$ we define:

\begin{equation}\label{twgenA}
\begin{cases}
 a_0(k) = a'_0 (k) \ \text{for} \  k \in 2\Z, \\
 a_i(k) = a'_i (k) + (-1)^{k}\si (a'_i) (k) \ \text{for} \  k \in \Z,\\
 a_n(k) = 2 a'_n (k) \ \text{for} \  k \in 2\Z, \\
 X(\pm a_0, k) = X(\pm a'_0, k) \  \text{for} \  k \in 2\Z, \\
 X(\pm a_i, k) = X(\pm a'_i, k) + (-1)^{k}X(\pm \si(a'_i), k) \ \text{for} \  k \in \Z, \text{and} \\
 X(\pm a_n, k) = 2 X(\pm a'_n, k) \  \text{for} \  k \in 2\Z.
\end{cases}
\end{equation}

For  $\g = D_{n +1}, r=2, 1 \leq i \leq n-1$ we define:

\begin{equation}\label{twgenD}
\begin{cases}
 a_0(k) = a'_0 (k) \ \text{for} \  k \in 2\Z, \\
 a_i(k) = 2a'_i (k) \ \text{for} \  k \in 2\Z,\\
 a_n(k) =  a'_n (k) + (- 1)^ka'_{n+1}(k) \ \text{for} \  k \in \Z, \\
 X(\pm a_0, k) = X(\pm a'_0, k) \  \text{for} \  k \in 2\Z, \\
 X(\pm a_i, k) = 2X(\pm a'_i, k) \  \text{for}  \  k \in 2\Z, \\
 X(\pm a_n, k) = X(\pm a'_n, k) + (-1)^kX(\pm a'_{n+1}, k) \  \text{for} \  k \in \Z.
\end{cases}
\end{equation}

For  $\g = D_4, r =3$ we define:
\begin{equation}\label{twgenD4}
\begin{cases}
a_0(k) = a'_0 (k) \ \text{for} \ k \in 3\Z \\
 a_1(k) = a'_1 (k) + \omega^{-k}a'_3(k) + \omega^{-2k}a'_4(k) \  \text{for} \  k \in \Z\\
 a_2(k) = 3 a'_2 (k) \  \text{for} \  k \in 3\Z \\
 X(\pm a_0, k) = X(\pm a'_0, k)  \ \text{for} \ k \in 3\Z \\
 X(\pm a_1, k) = X(\pm a'_1, k) + \omega^{-k} X(\pm a'_3, k) \\ + \omega^{-2k} X(\pm a'_4, k) ,\ \text{for} \  k \in \Z \\
X(\pm a_2, k) = 3 X(\pm a'_2, k) \ \text{for} \ k \in 3\Z.
\end{cases}
\end{equation}

Denote $\tilde{I} = I \cup \{0\}$ and extend the Cartan matrix $A = (a_{ij})_{i,j \in I}$ to $\tilde{A} = (a_{ij})_{i,j \in \tilde{I}}$ by defining $a_{00} = 2$, $a_{01} = -1 = a_{10} \ \text{for} \ \g = A_{2n-1}$, $a_{02} = -1 = a_{20} \ \text{for} \ \g = D_{n+1} \ \text{or} \ D_4$, and $a_{0j} = 0 = a_{j0}$ otherwise.

Let $t(\g)$ be the subalgebra of $t'(\g)$ generated by $\{{\not c} , a_0(2k) , a_i(k), a_n(2k), \\ X(\pm a_0, 2k), X(\pm a_i, k), X(\pm a_n, 2k) \mid 1 \leq i \leq n-1, k \in \Z\}$, $\{{\not c} , a_0(2k) , a_i(2k), \\ a_n(k), X(\pm a_0, 2k), X(\pm a_i, 2k), X(\pm a_n, k) \mid 1 \leq i \leq n-1, k \in \Z\}$, and $\{{\not c} , a_0(3k) , a_1(k), a_2(3k), X(\pm a_0, 3k), X(\pm a_1, k), X(\pm a_2, 3k) \mid k \in \Z\}$ for $\g = A_{2n-1}, D_{n+1}$ and $D_4$ respectively. Then using the untwisted relations (\ref{compuntw}) we obtain the following relations among the generators of $t(\g)$ (with $\not{c}$ being central). Below we assume  $k, k_i, l \in \Z, 2\Z \ \text{or} \ 3\Z$ depending on the generators of $t(\g)$ as above.
%%%%%%%%
\begin{enumerate}\label{twrelations}
%1
  \item $[a_0(k), a_0(l)] =
      a_{00} k \delta_{k,- l} \not{c} $ \\
%2
  \item $[a_0(k), a_j(l)] =
      r a_{0j} k \delta_{k,- l} \not{c} $ \\
where $1 \leq j \leq n$.
%3
  \item $[a_i(k), a_j(l)] =
    \begin{cases}
      r a_{ij} k \delta_{k,-\ell} \not{c} & (A_{2n-1}) \\
      r^2 a_{ij} k \delta_{k,-\ell} \not{c} & (D_{n+1}) \\
      r a_{ij} \delta_{k,0 \text{mod}r} k \delta_{k,-\ell} \not{c} & (D_4)
    \end{cases}$ \\
where $1 \leq i \leq j \leq n$ and $(i, j) \neq (n-1, n), (n, n)$.
%4
 \item $[a_{n-1}(k), a_n(l)] =
    \begin{cases}
      r a_{n-1,n} k \delta_{k,-\ell} \not{c} & (A_{2n-1}, D_4) \\
      r^2 a_{n-1,n} k \delta_{k,-\ell} \not{c} & (D_{n+1})
    \end{cases}$ \\
%5
 \item $[a_n(k), a_n(l)] =
    \begin{cases}
      r^2 a_{nn} k \delta_{k,-\ell} \not{c} & (A_{2n-1}, D_4) \\
      r a_{nn} k \delta_{k,-\ell} \not{c} & (D_{n+1}) \\
    \end{cases}$
%6
 \item $[a_0(k), X(\pm a_j, l)] =
      \pm a_{0j} X(\pm a_j, k+l) $ \\
where $0 \leq j \leq n$.
%7
 \item $[a_i(k), X(\pm a_0, l)] =
    \begin{cases}
      \pm r \delta_{k,0 \text{mod}r} a_{i0} X(\pm a_0, k+l) & (A_{2n-1}, D_{n+1}) \\
      \pm r a_{i0} X(\pm a_0, k+l)  & (D_4)
    \end{cases}$ \\
where $1 \leq i \leq n$.
%8
 \item $[a_i(k), X(\pm a_j, l)] =
    \begin{cases}
      \pm a_{ij} X(\pm a_j, k+l) & (A_{2n-1}, D_4) \\
      \pm r a_{ij} X(\pm a_j, k+l) & (D_{n+1}) \\
    \end{cases}$ \\
where $1 \leq i,j \leq n$ and $(i, j) \neq (n-1, n), (n, n-1), (n, n)$.
%9
 \item $[a_{n-1}(k), X(\pm a_n, l)] =
    \begin{cases}
      \pm \delta_{k,0 \text{mod}r} a_{n-1,n} X(\pm a_n, k+l) & (A_{2n-1}, D_4) \\
      \pm r a_{n-1,n} X(\pm a_n, k+l)& (D_{n+1})
    \end{cases}$ \\
%10
 \item $[a_n(k), X(\pm a_{n-1}, l)] =
    \begin{cases}
      \pm r a_{n,n-1} X(\pm a_{n-1}, k+l) & (A_{2n-1}, D_4) \\
      \pm \delta_{k,0 \text{mod}r} a_{n,n-1} X(\pm a_{n-1}, k+l) & (D_{n+1})
    \end{cases}$ \\
%11
 \item $[a_n(k), X(\pm a_n, l)] =
    \begin{cases}
      \pm r a_{nn} X(\pm a_n, k+l) & (A_{2n-1}, D_4) \\
      \pm a_{nn} X(\pm a_n, k+l) & (D_{n+1})
    \end{cases}$
%12
 \item $[X(\pm a_i, k) , X(\pm a_i, l) ] = 0 $ \\
where $0 \leq i \leq n$.
%13
 \item $[X(a_i, k) , X(-a_j, l) ] =
    \begin{cases}
      -\delta_{i,j} \big\{ a_i(k+l) ( 1 + \delta_{i,n}) \\ + k \delta_{k,-\ell} ( 1 + (1 - \delta_{i,0}) + 2 \delta_{i,n} ) \not{c} \big \} & (A_{2n-1}) \\
      -\delta_{i,j} \big\{ a_i(k+l) \big( 1 + (1 - \delta_{i,0} - \delta_{i,n}) \big) \\ + k \delta_{k,-\ell} \big( 4 - 3 \delta_{i,0} - 2 \delta_{i,n} \big) \not{c} \big \} & (D_{n+1}) \\
      -\delta_{i,j} \big\{ a_i(k+l) ( 1 + 2 \delta_{i,2}) \\ + k \delta_{k,-\ell} ( 1 + 2 \delta_{i,1} + 8 \delta_{i,2} ) \not{c} \big \} & (D_4)
    \end{cases}$  \\
where $0 \leq i,j \leq n$
%18
 \item $\text{ad} X(\pm a_p , k_2) X(\pm a_m , k_1) = 0$ for $0 \leq p \neq m \leq n$ and $a_{pm} = 0$
%19
 \item $\text{ad} X(\pm a_p, k_3) \text{ad} X(\pm a_p , k_2) X(\pm a_m , k_1) = 0$ for $0 \leq p \neq m \leq n$ and $a_{pm} = -1$
%20
 \item $\text{ad} X(\pm a_p, k_4) \text{ad} X(\pm a_p , k_3) \text{ad} X(\pm a_p , k_2) X(\pm a_m , k_1) = 0$ for $0 \leq p \neq m \leq n$ and $a_{pm} = -2$
%21
 \item $\text{ad} X(\pm a_p, k_5) \text{ad} X(\pm a_p, k_4) \text{ad} X(\pm a_p , k_3) \text{ad} X(\pm a_p , k_2) X(\pm a_m , k_1) = 0$ for $0 \leq p \neq m \leq n$ and $a_{pm} = -3$
\end{enumerate}

The above defining relations of $t(\g)$ can be written in terms of power series as follows.
\begin{enumerate}
%1
  \item $[a_0(z), a_0(w)] =
      \frac{1}{r} a_{00} \sum_{j=0}^{r-1} \partial_w\delta(z-\omega^{-j} w) \not{c}$
%2
  \item $[a_0(z), a_j(w)] =
      a_{0j} \sum_{j=0}^{r-1} \partial_w\delta(z-\omega^{-j} w) \not{c}$ \\
where $1 \leq j \leq n$.
%3
 \item $[a_i(z), a_j(w)] =
    \begin{cases}
      r a_{ij} \partial_w\delta(z-w) \not{c} & (A_{2n-1}) \\
      r a_{ij} \sum_{j=0}^{r-1} \partial_w\delta(z-\omega^{-j} w) \not{c} & (D_{n+1})  \\
      a_{ij} \sum_{j=0}^{r-1} \partial_w\delta(z-\omega^{-j} w) \not{c} & (D_4)
    \end{cases}$ \\
where $1 \leq i \leq j \leq n$ and $(i, j) \neq (n-1, n), (n, n)$.
%4
 \item $[a_{n-1}(z), a_n(w)] =
    \begin{cases}
      a_{n-1,n} \sum_{j=0}^{r-1} \partial_w\delta(z-\omega^{-j} w) \not{c} & (A_{2n-1}, D_4) \\
      r a_{n-1,n} \sum_{j=0}^{r-1} \partial_w\delta(z-\omega^{-j} w) \not{c} & (D_{n+1})
    \end{cases}$
%5
 \item $[a_n(z), a_n(w)] =
    \begin{cases}
      r a_{nn} \sum_{j=0}^{r-1} \partial_w\delta(z-\omega^{-j} w) \not{c} & (A_{2n-1}, D_4) \\
      r a_{nn} \partial_w\delta(z-w) \not{c} & (D_{n+1})
    \end{cases}$
%6
 \item $[a_0(z), X(\pm a_j, w)] =
      \pm \frac{1}{r} a_{0j} X(\pm a_j, w) \sum_{j=0}^{r-1} \delta(z-\omega^{-j} w)$ \\
where $0 \leq j \leq n$.
%7
 \item $[a_i(z), X(\pm a_0, w)] =
      \pm a_{i0} X(\pm a_0, w) \sum_{j=0}^{r-1} \delta(z-\omega^{-j} w)$ \\
where $1 \leq i \leq n$.
%8
 \item $[a_i(z), X(\pm a_j, w)] =
    \begin{cases}
      \pm a_{ij} X(\pm a_j, w) \delta(z-w) & (A_{2n-1}, D_4)  \\
      \pm a_{ij} X(\pm a_j, w) \sum_{j=0}^{r-1} \delta(z-\omega^{-j} w) & (D_{n+1})
    \end{cases}$ \\
where $1 \leq i,j \leq n$ and $(i, j) \neq (n-1, n), (n, n-1), (n, n)$.
%9
 \item $[a_{n-1}(z), X(\pm a_n, w)] =
    \begin{cases}
      \pm \frac{1}{r} a_{n-1,n} X(\pm a_n, w) \sum_{j=0}^{r-1} \delta(z-\omega^{-j} w) & (A_{2n-1}, D_4) \\
      \pm a_{n-1,n} X(\pm a_n, w) \sum_{j=0}^{r-1} \delta(z-\omega^{-j} w) & (D_{n+1})
    \end{cases}$
%10
 \item $[a_n(z), X(\pm a_{n-1}, w)] =
    \begin{cases}
      \pm a_{n,n-1} X(\pm a_{n-1}, w) \sum_{j=0}^{r-1} \delta(z-\omega^{-j} w) & (A_{2n-1}, D_4) \\
      \pm \frac{1}{r} a_{n,n-1} X(\pm a_{n-1}, w) \sum_{j=0}^{r-1} \delta(z-\omega^{-j} w) & (D_{n+1})
    \end{cases}$
%11
 \item $[a_n(z), X(\pm a_n, w)] =
    \begin{cases}
      \pm a_{nn} X(\pm a_n, w) \sum_{j=0}^{r-1} \delta(z-\omega^{-j} w) & (A_{2n-1}, D_4) \\
      \pm a_{nn} X(\pm a_n, w) \delta(z-w) & (D_{n+1}) \\
    \end{cases}$
%12
 \item $[X(\pm a_i, z), X(\pm a_i, w)] = 0$ \\
where $0 \leq i \leq n$.
%13
 \item $[X(a_i, z), X(- a_j, w)] = \\
    \begin{cases}
      -\delta_{i,j} \big\{ ( 1 - \frac{1}{2} \delta_{i,0}) a_i(w) \big( \delta(z-w) + (\delta_{i,0} + \delta_{i,n}) \delta(z+w) \big) + \\
\indent ( 2 - \frac{3}{2} \delta_{i,0} ) \big(\partial_w\delta(z-w) + (\delta_{i,0} + \delta_{i,n}) \partial_w\delta(z+w) \big) \not{c} \big \} & (A_{2n-1}) \\
      -\delta_{i,j} \big\{ ( 1 - \frac{1}{2} \delta_{i,0}) a_i(w) \big( \delta(z-w) + (1 - \delta_{i,n}) \delta(z+w) \big) + \\
\indent ( 2 - \frac{3}{2} \delta_{i,0} ) \big(\partial_w\delta(z-w) + (1 - \delta_{i,n}) \partial_w\delta(z+w) \big) \not{c} \big \} & (D_{n+1}) \\
      -\delta_{i,j} \big\{ ( 1 - \frac{2}{3} \delta_{i,0}) a_i(w) \big( \delta(z-w) + \delta(z-\omega^{-1} w) + \delta(z-\omega^{-2} w) \\ \indent - \delta_{i,1} (\delta(z-\omega^{-1} w) + \delta(z-\omega^{-2} w)) \big) + \\
\indent ( 3 - \frac{8}{3} \delta_{i,0} ) \big( \partial_w\delta(z-w) + \partial_w\delta(z-\omega^{-1} w) + \partial_w\delta(z-\omega^{-2} w) \\ \indent - \delta_{i,1} (\partial_w\delta(z-\omega^{-1} w) + \partial_w\delta(z-\omega^{-2} w)) \big) \not{c} \big \} & (D_4)
    \end{cases}$  \\
where $0 \leq i,j \leq n$.
%14
 \item $\text{ad} X(\pm a_p , z_2) X(\pm a_m , z_1) = 0$ for $0 \leq p \neq m \leq n$ and $a_{pm} = 0$
%15
 \item $\text{ad} X(\pm a_p, z_3) \text{ad} X(\pm a_p , z_2) X(\pm a_m , z_1) = 0$ for $0 \leq p \neq m \leq n$ and $a_{pm} = -1$
%16
 \item $\text{ad} X(\pm a_p, z_4) \text{ad} X(\pm a_p , z_3) \text{ad} X(\pm a_p , z_2) X(\pm a_m , z_1) = 0$ for $0 \leq p \neq m \leq n$ and $a_{pm} = -2$
%17
 \item $\text{ad} X(\pm a_p, z_5) \text{ad} X(\pm a_p, z_4) \text{ad} X(\pm a_p , z_3) \text{ad} X(\pm a_p , z_2) X(\pm a_m , z_1) = 0$ for $0 \leq p \neq m \leq n$ and $a_{pm} = -3$
\end{enumerate}

By definition of $L(\g, \si)$ and $t(\g)$ it is clear that $\pi (t(\g)) \subseteq L(\g, \si)$. Hence $\bar{\pi} = \pi |_{t(\g)} : t(\g) \longrightarrow L(\g, \si)$. Explicitly the map $\bar{\pi}$ is given by:
\begin{equation*}
\begin{cases}
 \not{c} \mapsto 0, \\
 a_0(k) \mapsto h'_0 \otimes s^k \\
 a_i(k) \mapsto \sum_{j=0}^{r-1} \sigma^{j} (h'_{i}) \otimes (\omega^{-j} s)^k \\
 X( a_0, k) \mapsto e'_0 \otimes s^kt \\
 X(- a_0, k) \mapsto -f'_0 \otimes s^kt^{-1} \\
 X(a_i, k) \mapsto \sum_{j=0}^{r-1} \sigma^{j} (e'_{i}) \otimes (\omega^{-j} s)^k \\
 X(-a_i, k) \mapsto \sum_{j=0}^{r-1} \sigma^{j} (f'_{i}) \otimes (\omega^{-j} s)^k
\end{cases}
\end{equation*}
for $1 \leq i \leq n$. The following theorem which is the main result of this paper shows that $t(\g)$ gives a realization of the twisted toroidal Lie algebra $T(\g)$.

\begin{theorem}
The map $\bar{\pi}$ is a surjective homomorphism, the kernel of $\bar{\pi}$ is contained in the center $Z(t(\g))$ and $(t(\g), \bar{\pi})$ is the universal central extension of $L(\g, \si)$.
\end{theorem}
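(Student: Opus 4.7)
The plan is to follow the MRY blueprint, inheriting structure from $\pi$ via the inclusion $t(\g)\subseteq t'(\g)$ and then comparing $t(\g)$ with the Fu--Jiang universal central extension $T(\g)$. Since $\pi$ is a Lie algebra homomorphism by Theorem~\ref{untwisted}, the restriction $\bar\pi$ is automatically a Lie homomorphism; direct substitution in the generator formulas \eqref{twgenA}--\eqref{twgenD4} shows that the image of each generator lies in $\g_{\bar k}\otimes\cA_{\bar k}$, so $\mathrm{Im}(\bar\pi)\subseteq L(\g,\si)$. Moreover, $\ker\bar\pi=\ker\pi\cap t(\g)\subseteq Z(t'(\g))\cap t(\g)\subseteq Z(t(\g))$, proving centrality of the kernel at once.

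To prove surjectivity, I would note that the nontrivial $t$-degree generators $\bar\pi(X(a_0,k))=e_0'\otimes s^k t$ and $\bar\pi(X(-a_0,k))=-f_0'\otimes s^k t^{-1}$, combined with the $t$-degree zero generators $\bar\pi(X(\pm a_i,k))$ and the Cartan images $\bar\pi(a_i(k))$, allow one to produce every element of the form $x\otimes s^jt^m$ with $x\in\g_{\bar j}$ in $L(\g,\si)$ by iterated brackets. This is a root-space induction identical to the one in MRY, with the twist only adding $\si$-symmetrization to the starting generators; the $\cK$-part of $T(\g)$ is central and does not lie in the image of $\bar\pi$, so surjectivity onto $L(\g,\si)$ is all that is required.

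The universality claim reduces, via the Fu--Jiang theorem that $(T(\g),\eta)$ is the universal central extension of $L(\g,\si)$, to constructing a Lie algebra homomorphism $\Phi:T(\g)\to t(\g)$ with $\bar\pi\circ\Phi=\eta$. I would define $\Phi$ on loop generators $x\otimes s^jt^m$ by iterated brackets of $a_i(k)$ and $X(\pm a_i,k)$ in $t(\g)$ that lift the corresponding expressions under $\bar\pi$, and on the K\"ahler basis $\{\overline{s^{j-1}t^m\,ds},\overline{s^jt^{-1}\,dt},\overline{s^{-1}\,ds}\}$ by sending each differential to the central term produced in the corresponding commutator of $t(\g)$---as displayed in the power-series relations above. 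Verifying that $\Phi$ respects the cocycle identity $[x\otimes a,y\otimes b]=[x,y]\otimes ab+(x\mid y)\overline{b\,da}$ of $T(\g)$ then makes $\Phi$ a well-defined surjection. Because $\ker\Phi\subseteq\ker\eta$ is central in the universal object $T(\g)$, $\Phi$ must be an isomorphism.

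The hard part is this last verification: matching the twisted $\overline{b\,da}$ cocycle on $T(\g)$ with the central terms appearing in the brackets of $t(\g)$. In the twisted setting the delta-function expansions involve sums $\sum_j\partial_w\delta(z-\omega^{-j}w)$, which must reproduce $\overline{b\,da}$ after $\si$-symmetrization via \eqref{kahlercalc}. The case analysis splits by type $A_{2n-1}$, $D_{n+1}$, $D_4$, and within each type the asymmetry between nodes fixed and moved by $\si$---reflected in the different normalizations in \eqref{twgenA}--\eqref{twgenD4} and in the values of $d_i$---demands careful case-by-case bookkeeping, and this is the technical heart of the argument.
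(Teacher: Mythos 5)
Your first three steps (restriction of $\pi$ gives a Lie homomorphism into $L(\g,\si)$, the kernel computation $\ker\bar\pi=\ker\pi\cap t(\g)\subseteq Z(t'(\g))\cap t(\g)=Z(t(\g))$, and surjectivity by generating the twisted loop algebra from the $\si$-symmetrized generators) match the paper, which invokes \cite[Proposition 8.3]{K} for surjectivity. The gap is in the universality argument. You build a map $\Phi:T(\g)\to t(\g)$ and conclude it is an isomorphism ``because $\ker\Phi\subseteq\ker\eta$ is central in the universal object $T(\g)$.'' That inference is false: the universal central extension admits many surjections onto central extensions of $L(\g,\si)$ with nonzero central kernel (for instance $\eta$ itself, or the quotient by any subspace of $\cK$), so centrality of $\ker\Phi$ does not force injectivity. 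Moreover your $\Phi$ is not obviously well defined: assigning to $x\otimes s^jt^m$ ``an iterated bracket lifting it'' requires showing that two different iterated brackets with the same image under $\bar\pi$ agree in $t(\g)$, which is essentially the whole problem you are trying to solve.

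The paper goes in the opposite direction, which is both what the universal property actually demands and technically easier: it defines $\psi:t(\g)\to T(\g)$ on the generators by \eqref{psi} (notably $\not{c}\mapsto\overline{s^{-1}ds}$ and $a_0(k)\mapsto h_0'\otimes s^k+\overline{s^{k}t^{-1}dt}$), checks that $\psi$ preserves the defining relations --- only the relations involving $\not{c}$ and $a_0(k)$ need a genuine computation with \eqref{kahlercalc}, since $\psi$ and $\bar\pi$ differ only on those generators and only by elements of $\cK$ --- and then, given an arbitrary central extension $(\mathcal{V},\gamma)$ of $L(\g,\si)$, composes with the map $\lambda:T(\g)\to\mathcal{V}$ furnished by the Fu--Jiang universality of $T(\g)$ to obtain $\gamma\lambda\psi=\eta\psi=\bar\pi$. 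This produces the required lift out of $t(\g)$ directly, with no need to prove that $t(\g)$ and $T(\g)$ are isomorphic. If you insist on your direction, you must supply an independent proof that $\Phi$ is injective (equivalently, that the central elements of $t(\g)$ corresponding to the K\"ahler basis of $\cK$ are linearly independent), and that is precisely what you cannot get for free from centrality of the kernel.
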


\begin{proof} Using \cite[Proposition 8.3]{K}, it follows that the map $\bar{\pi}$ is surjective. By Theorem \ref{untwisted}, we have $ker(\pi) \subseteq Z(t'(\g))$. Hence $ker(\bar{\pi}) = ker(\pi) \cap t(\g) \subseteq Z(t'(\g)) \cap t(\g) = Z(t(\g))$. Therefore, $(t(\g), \bar{\pi})$ is a central extension of $L(\g, \si)$. It is left to show that $(t(\g), \bar{\pi})$ is the universal central extension of $L(\g, \si)$. Suppose $(\mathcal{V} , \gamma)$ is a central extension of $L(\g , \si)$. Since $(T(\g), \eta)$ is the universal central extension of $L(\g , \si)$,  we have a unique map $\lambda : T(\g) \rightarrow \mathcal{V}$ such that $\gamma \lambda = \eta$. Define the map $\psi : t(\g) \rightarrow T(\g)$ by:

\begin{equation}\label{psi}
\begin{cases}
 \not{c} \mapsto \overline{s^{-1}ds} \\
 a_0(k) \mapsto h'_0 \otimes s^k + \overline{s^{k}t^{-1}dt} \\
 a_i(k) \mapsto \sum_{j=0}^{r-1} \sigma^{j} (h'_{i}) \otimes (\omega^{-j} s)^k \\
 X( a_0, k) \mapsto e'_0 \otimes s^kt \\
 X(- a_0, k) \mapsto -f'_0 \otimes s^kt^{-1} \\
 X( a_i, k) \mapsto \sum_{j=0}^{r-1} \sigma^{j} (e'_{i}) \otimes (\omega^{-j} s)^k \\
 X(- a_i, k) \mapsto -\sum_{j=0}^{r-1} \sigma^{j} (f'_{i}) \otimes (\omega^{-j} s)^k
\end{cases}
\end{equation}
for $1 \leq i \leq n$. Note that the map $\psi$ and $\bar{\pi}$ differ only on $\not{c}$ and $a_0(k)$ by elements of $\cK$. Hence
$\eta \psi = \bar{\pi}$. Since $\bar{\pi}$ is a homomorphism, to show that $\psi$ is a homomorphism it suffices to show that $\psi$ preserves the defining relations involving $a_0(k)$ by direct calculations. For example, using (\ref{kahlercalc}) when $\g = A_{2n-1}$ or $D_{n+1}$ we have:\\

$\displaystyle \big[ \psi \big( a_0(k) \big ), \psi \big( a_j(l) \big) \big]
= \big[ h'_0 \otimes s^k + \overline{s^{k}t^{-1}dt} ,
 h'_{j} \otimes s^l + \sigma (h'_j) \otimes (-s)^l \big]$ \\
$\displaystyle = ( h'_0 | h'_j ) \overline{s^{l}ds^k} + ( h'_0 | \sigma (h'_j) ) \overline{s^{l}ds^k} (-1)^{l}
= r a_{0j} k \delta_{k,-l}  \psi(\not{c})$ \\
since for $\g = A_{2n-1}$, $( h'_0 | h'_j ) = ( h'_0 | \sigma (h'_j) ) = -\delta_{j,1} = a_{0j}$ and for $\g =D_{n+1}$, $( h'_0 | h'_j ) = ( h'_0 | \sigma (h'_j) ) = -\delta_{j,2} = a_{0j}$. \\

Similarly, when $\g = D_4$ using (\ref{kahlercalc}) we have:

$\displaystyle \big[ \psi \big( a_0(k) \big ), \psi \big( a_j(l) \big) \big]\\
= \bigg[ h'_0 \otimes s^k + \overline{s^{k}t^{-1}dt} ,
 h'_j \otimes s^{l} + \sigma (h'_j) \otimes (\omega^{-1} s)^{l} + \sigma^2 (h'_j) \otimes (\omega^{-2} s)^{l} \bigg]$ \\
$\displaystyle = \big( ( h'_0 | h'_j ) + ( h'_0 | \sigma (h'_j) ) (\omega^{-1})^{k+l} + ( h'_0 | \sigma^{2} (h'_j) ) (\omega^{-2})^{k+l} \big) \overline{s^{l}ds^{k}} \\
\displaystyle = a_{0j} \big( 1 + (\omega^{-1})^{k+l} + (\omega^{-2})^{k+l} \big ) \overline{s^{l}ds^{k}}= ra_{0j}  k \delta_{k,-l} \psi (\not{c})$, \\
since $k \in 3\Z$ and $( h'_0 | h'_j ) = ( h'_0 | \sigma (h'_j) ) = ( h'_0 | \sigma^2 (h'_j) ) = -\delta_{j,2} = a_{0j}$.

Thus we have a homomorphism $\lambda\psi : t(\g) \longrightarrow \mathcal{V}$ and
$\gamma\lambda\psi = \eta\psi = \bar{\pi}$ giving the following commuting diagram:

\begin{center}
\begin{tikzpicture}[node distance=2cm, auto]
  \node (tauh) {$T(\g)$};
  \node (drinfeldt) [below of=tauh] {$t(\g)$};
  \node (V) [above of=tauh] {$\mathcal{V}$};
  \node (L) [right of=tauh] {$L(\g , \si)$};
  \draw[->] (tauh) to node {$\lambda$} (V);
  \draw[->] (tauh) to node {$\eta$} (L);
 % \draw[transform canvas={xshift=0.5ex}, ->] (tauh) to node {$\bar{\lambda}$} (drinfeldt);
  \draw[->] (V) to node {$\gamma$} (L);
  \draw[->] (drinfeldt) to node {$\psi$} (tauh);
  \draw[->] (drinfeldt) to node [swap] {$\bar{\pi}$} (L);
\end{tikzpicture}
\end{center}

Since $T(\g)$ is the universal central extension of $L(\g, \si)$, the lower triangle commutes which implies that the map $\psi$ is unique.

\end{proof}

\bibliographystyle{amsalpha}

\begin{thebibliography}{NSW}

\bibitem[BB]{BB} S. ~Berman and Y. ~Billig, {\em Irreducible representations for toroidal Lie algebras}, J. Algebra {\bf 221} (1999), 188-231.

\bibitem[BK]{BK}   S. ~Berman and Y. ~Krylyuk. {\em Universal central extensions of twisted and untwisted Lie algebras extended over commutative rings,} J. Algebra. {\bf 173} (1995), 302-347.

\bibitem[D]{D} V. G. Drinfeld, {\em A new realization of Yangians and quantum affine algebras}, Soviet Doklady {\bf 36} (1987), 212-216.

\bibitem[EM]{EM} S.~Eswara Rao and R.~V.~Moody, {\em Vertex representations for $n$-toroidal Lie algebras and a generalization of the Virasoro algebras},
Comm. Math. Phys. {\bf 159} (1994), 239-264.


\bibitem[F]{F} I.~B.~Frenkel,  {\em Representations of Kac-Moody algebras and dual resonance models}, in:
Applications of group theory in physics and mathematical physics (Chicago, 1982), Lect. Appl. Math. {\bf 21}, Amer. Math. Soc.,
Providence, 1985. pp. 325-353.
%{\em Spinor representations of affine Lie algebras}, Proc. Nat'l. Acad. Sci. USA {\bf 77}, No. 11 (1980), 6303-6306.

%\bibitem[FF]{FF} A.~Feingold and I.~B.~Frenkel,
%{\em Classical affine algebras}, Adv. Math. {\bf 56} (1985),
%117-172.

\bibitem[FJ]{FJ} J.~Fu and C. Jiang. {\em Integrable representations for the twisted full toroidal Lie algebras}, J. Algebra {\bf 307} (2007), 769-794.


\bibitem[FJW]{FJW} I. Frenkel, N. Jing and W. Wang, {\em Vertex representations
via finite groups and the McKay correspondence}, Int. Math. Res.
Not. {\bf 4} (2000), 195-222.

\bibitem[FLM]{FLM} I.~B.~Frenkel, J.~Lepowsky and A.~Meurman,
{\em Vertex operator algebras and the Monster}, Academic Press,
New York, 1988.

\bibitem[FM]{FM} M. ~Fabbri and R.~V. ~Moody, {\em Irreducible representations of Virasoro-toroidal Lie algebras}, Comm. Math. Phys. {\bf 159} (1994), 1-13.

\bibitem[G]{G} Y. Gao, {\em Fermionic and bosonic representations of
the extended affine Lie algebra ${gl}_N({\Bbb C}_q)$}. Canad.
Math. Bull. {\bf 45} (2002), 623-633. %no. 4, 623-633.

\bibitem[JMT]{JMT} N.~Jing, K.~C.~Misra and S.~Tan, {\em Bosonic realizations of higher level toroidal Lie algebras}, Pacific J. Math. {\bf 219} (2005), 285-302.

\bibitem[K]{K} V.~G.~Kac, {\em Infinite dimensional Lie algebras}, 3rd. Ed.,
Cambridge University Press, Cambridge, 1990.

\bibitem[L]{L} M. Lau, {\em Bosonic and fermionic representations of Lie
algebra central extensions}. Adv. Math. {\bf 194} (2005), 
225-245.

\bibitem[MRY]{MRY} R.~V.~Moody, S.~E.~Rao and T.~Yokonuma,
{\em Toroidal Lie algebras and vertex
representations}, Geom. Dedicata {\bf 35} (1990),
283-307.

\bibitem[NSW]{NSW} E. ~Neher, A. ~Savage and W.~Wang, {\em Geometric representation theory and extended affine Lie algebras}, American Mathematical Society, Providence, RI, 2011.

\bibitem[S]{S} K. Saito, {\em Around the theory of the generalized weight system: relations with singularity theory, the generalized Weyl group and its invariant theory, etc}. Selected papers on harmonic analysis, groups, and invariants, pp.101-143, Amer. Math. Soc. Transl. Ser. 2, {\bf 183}, Amer. Math. Soc., Providence, RI, 1998.

\bibitem[T1]{T1} S. Tan, {\em Principal construction of the toroidal Lie algebra of type $A_1$}, Math. Zeit. {\bf 230} (1999), 621-657.

\bibitem[T2]{T2} S. Tan, {\em Vertex operator representations for toroidal Lie
algebras of type $B_l$}, Comm. Algebra {\bf 27} (1999), 3593-3618.

\end{thebibliography}

\end{document}